\newcommand{\excise}[1]{}
\newtheorem{theorem}{Theorem}
\numberwithin{theorem}{section}
\newtheorem{proposition}[theorem]{Proposition}
\newtheorem{corollary}[theorem]{Corollary}
\newtheorem{lemma}[theorem]{Lemma}
\newtheorem{question}[theorem]{Question}
\theoremstyle{definition}
\newtheorem{definition}[theorem]{Definition}
\newtheorem{remark}[theorem]{Remark}
\newtheorem{example}[theorem]{Example}
\DeclareMathOperator{\LL}{L}
\DeclareMathOperator{\MM}{M}
\DeclareMathOperator{\NN}{{N}}
\DeclareMathOperator{\DD}{{D}}
\DeclareMathOperator{\I}{{I}}
\DeclareMathOperator{\A}{{A}}
\newcommand{\cc}{{\bf c}}
\newcommand{\proofsketch}{\vspace*{-1ex} \noindent {\emph{Proof Sketch.} }}
\subjclass[2010]{Primary }
\begin{document}

\title{Chip-firing on general invertible matrices}
\author[J.~Guzm\'an]{Johnny Guzm\'an}
 \email{johnny\_guzman@brown.edu}
 \address{Division of Applied Mathematics, Brown University, Providence, RI 02906}
\author{Caroline Klivans}
\email{klivans@brown.edu}
\address{Division of Applied Mathematics and Department of Computer Science, Brown University, Providence, RI 02906}

\date{\small \today}

 \thanks{}
 
 \keywords{chip-firing, energy minimization, combinatorial Laplacians}
 
 \subjclass{}
 

\begin{abstract}
We propose a generalization of the graphical chip-firing model allowing for the redistribution dynamics to be governed by any invertible integer matrix while maintaining the long term critical, superstable, and energy minimizing behavior of the classical model.


\end{abstract}

\maketitle

\section{Introduction}

The classical chip-firing model describes the dynamics of a Laplacian
action on a graph and can be seen as a discrete diffusion process, see e.g.~\cite{BTW, Dhar, gabe}.  For
a graph with $n$ vertices, configurations are integer vectors
${\bf{c}} \in \mathbb{Z}^n$ where $c_i$ is interpreted as the number
of chips at vertex $i$.  At each time step, chips are redistributed
via local rules.  For a configuration ${\bf c}$, if a vertex $i$ has
at least as many chips as it has neighbors, $i$ {fires} sending one
chip to each of its neighbors, forming the new configuration ${\bf
  c}'$.  This action is easily identified as subtracting the
appropriate row of the graph Laplacian: ${\bf c}' = {\bf c} -
\LL^Te_i$, where $\LL$ is the graph Laplacian and $e_i$ is the $i$th
indicator vector.

This model is well studied and has a strong theory of long term
behavior in terms of self-organized criticality~\cite{BTW, Dhar}, superstability~\cite{Perlman, Primer} and
energy minimization~\cite{BS, GK}.  In particular, every initial configuration
eventually stabilizes to a unique stable configuration - the
order of firings does not matter.

Already in~\cite{gabe}, Gabrielov broadened the setting
for chip-firing realizing that one could consider a finite set of
states and the dynamics governed by any \emph{avalanche finite}
matrix.  The system did not need to come from an underlying graph; the
firing rule is governed by the matrix.  Such systems similarly
converge to a set of stable configurations regardless of the
order of firings.

In more recent work, the present authors~\cite{GK} recognized
avalanche-finite matrices as precisely \emph{$\MM$-matrices}, a well
known class of matrices from other contexts and continued to expand
the theory of chip-firing in this setting.  In particular, the theory
of criticality, superstability and energy minimization extends to all $\MM$-matrices. 
The name \emph{avalanche finite}, however, is quite appropriate.  If
one attempts to chip-fire using a matrix outside of this class,
Gabrielov showed that not all initial configurations will eventually
stabilize.  Some state will always be able to fire.

In this paper we propose a significant broadening of the chip-firing
model allowing for the redistribution dynamics to be governed by
\emph{any invertible integer matrix} while maintaining the long term critical,
superstable, and energy minimizing behavior of the classical model.
To overcome the seeming contradiction with Gabrielov's work, we change
the scope of \emph{valid} configurations over which chip-firing
occurs.  In classical chip-firing, valid configurations are precisely
non-negative integer vectors, i.e. integer points in the closure of
the positive orthant.  In our more general setting, the positive
orthant is replaced by a different cone.  As a consequence, valid (and
hence also critical and superstable) configurations may contain
negative entries.  

The general set-up is as follows.  For a fixed invertible matrix
$\LL$, one must choose an $\MM$-matrix $\MM$.  The chip-firing model
will be dictated by the pair $(\LL,\MM)$.  The matrix $\MM$ can be any
$\MM$-matrix, it need not depend on $\LL$.  Hence for a fixed
distribution matrix $\LL$, one can in fact define many
chip-firing models.  The valid configurations are a collection of
integer points, denoted $S^+$, determined by the matrix $\LL\MM^{-1}$.  Given a configuration ${\bf c}\in S^+$, a site $i$ is allowed
to fire if the resulting configuration ${\bf c}' = {\bf c} - {\LL}e_i$
remains in $S^+$.  The well-behaved dynamics of this model are
established by relating chip-firing via $\LL$ over $S^+$ to
chip-firing via $\MM$ over a related set of (not necessarily integer)
configurations.  As in the graphical case, critical, superstable and energy
minimizing configurations of $S^+$ all form systems of representatives of the
\emph{critical group} of $\LL$, $K(\LL) := \textrm{ coker}(\LL)$.

As a special case, one easily recovers classical chip-firing on a
graph.  Let $\LL$ be (the transpose of) a reduced graph Laplacian. Then
$\LL$ is itself an $\MM$-matrix.  Thus for such an $\LL$ (or for any
initial $\MM$-matrix) one can choose $\MM = \LL$ giving the pairing
$(\LL,\LL)$.
As we will see in Section~\ref{graphs}, $S^+$ then reduces to
$\mathbb{Z}^n_{\geq 0}$ and we recover the well known chip-firing model
for graphs and digraphs.

As another special case, one can always set $\MM = \I$, since the
identity matrix is an $\MM$-matrix.  The pairing $(\LL,\I)$ is
particularly interesting.  The superstable configurations for $(\LL,
\I)$ are precisely the integer points of the fundamental
parallelepiped of the lattice generated by $\LL$, see Section~\ref{lattice}.  It is well-known
that these integer points form a system of representatives for
coker$(\LL)$.  The theory here now gives a dynamical system which
achieves these configurations.

In Section~\ref{sec:general}, we explain the general set-up for
chip-firing over an invertible integer matrix $\LL$ with respect to a
choice of $\MM$-matrix $\MM$.  In Sections~\ref{sec:critical},
\ref{sec:energy}, and \ref{sec:superstable} we show the existence and
uniqueness of critical, energy minimizing, and superstable
configurations respectively.  Finally, in Section~\ref{sec:cases} we consider how
the long term dynamics vary under different choices of $\MM$ including $\MM = \LL$ and $\MM=\I$.
  We also consider the special case of
chip-firing in higher dimensions where $\LL$ is a reduced
combinatorial Laplacian of a simplicial complex as in~\cite{DKM}.

\section{General Chip-Firing}\label{sec:general}

Let $\LL$ be an $n \times n$ invertible matrix with integer
coefficients.  Consider $n$ as the number of states of a system.  A
\emph{configuration} of the system is any integer vector $\cc \in \mathbb{Z}^n$.
The value $\cc_i$ is the amount of commodity (chips, dollars, flow,
etc.) at site $i$.  $\LL$ dictates the redistribution of the commodity
among the sites of the system.

Now let $\MM$ be an $n \times n$ M-matrix with real entries and define
$\NN := \LL\MM^{-1}$. The matrix $\NN$ will dictate which configurations are
\emph{valid} configurations of the system.
In particular, given ${\NN}$ define the set:

\begin{equation*}
S^+ =\{{\NN}x \, | \, {\NN}x \in \mathbb{Z}^n, x \in \mathbb{R}^n_{\geq 0} \}.
\end{equation*}
$S^+$ consists of all integer vectors formed by multiplying $\NN$ by a non-negative real vector.
Note that without the non-negativity condition on $x$, this set would simply be all of
$\mathbb{Z}^n$ since $\NN$ is invertible.
The set of integer points in $S^+$ are the valid configurations for
the pairing $(\LL,\MM)$, under the dynamics of $\LL$.  In classical chip-firing on a graph, $S^+$
is exactly the non-negative integer points.

For a valid configuration $\cc \in S^+$, a site $i$ is ready to fire
if the configuration $\cc' = \cc - {\LL}e_i$,  resulting from subtracting
the $i$th row of $\LL$,  is also valid, i.e. if $\cc'$ remains in $
S^+$.  A configuration is called \emph{stable} if no site is ready to
fire.  Similarly, a set of sites can multifire (with multiplicity) if
the resulting configuration $\cc'' = \cc - {\LL}z$ is also valid,
i.e. if $\cc''$ remains in $S^+$, where $z_i$ is the number of times
site $i$ fires.  A configuration in which no set of sites can
multifire is called \emph{superstable}.  We will investigate the
properties of  superstable configurations in the following
sections along with notions of critical and energy minimizing
configurations.  In particular, we will consider these configurations 
with respect to the following equivalence relation on configurations.  

\begin{definition}For two configurations $f,g \in \mathbb{Z}^n$, define $f \sim_{\LL} g$ if $f=g-{\LL}z$ for some $z \in \mathbb{Z}^n$. 
\end{definition}
 Namely, the equivalence classes form the elements of the cokernel of $\LL$, coker($\LL$) = $\mathbb{Z}^n / \textrm{im} \LL$, the critical group of ${\LL}$.  
Each of the collections: superstable, critical, and energy minimizers will form a system of representatives under this equivalence relation.  Namely, there exist unique critical, energy minimizing and superstable configurations per equivalence class in $S^+$, see Theorems~\ref{thm:critical}, \ref{thm:unique}, and \ref{thm:SS}. 

These results will follow naturally from a second chip-firing system
running in parallel with distribution matrix $\MM$. Importantly, $\MM$ is an $\MM$-matrix, which are widely used in many contexts such as economics and scientific computing, see e.g.~\cite{BurmanErn, CiarletRaviart, Leontief} and~\cite{Plemmens} and references therein.  There are many known characterizations of $\MM$-matrices.  We mention the three most important in our context.  
\begin{definition}\label{def:M}

An $n \times n$ non-singular matrix $\MM$ such that $\MM_{i,j} \leq 0$ for all $i \neq j$ and $\MM_{ii} > 0$ for all $i$, is an $\MM$-matrix if any of the following equivalent conditions hold:
\begin{enumerate}
\item $\MM$ is avalanche finite. 
\item All entries of $\MM^{-1}$ are non-negative.
\item There exists $x \in \mathbb{R}^n$ such that $x \geq 0$ and ${\MM}x$ has all positive entries.  
\end{enumerate}
\end{definition}
The equivalence of condition (a) is due to Gabrielov~\cite{gabe} and
refers to a system with non-negative valid configurations.  We refer to Plemmons~\cite{Plemmens} for conditions (b) and (c) and many others. 

The valid configurations corresponding to a choice of $\MM$ are the vectors that generate points in $S^+$. 
 Specifically,
define
\begin{equation*}
R^+=\{ x \in \mathbb{R}^n \, | \, {\NN}x \in \mathbb{Z}^n, x \geq 0 \}.
\end{equation*}
The elements of $R^+$ are non-negative real vectors such that ${\NN}x$ is integer, equivalently ${\NN}x$ is in $S^+$.  The elements of $R^+$ are the valid configurations for the pair $(\LL,\MM)$ under the dynamics of $\MM$. 
\begin{definition} For two configurations $x,y \in \mathbb{R}^n$, define $x \sim_{\MM} y$ if $x=y-{\MM}z$ for some $z \in \mathbb{Z}^n$. 
\end{definition}
 
\begin{proposition}
  Let $f,g \in S^+$ and $x,y \in R^+$ such that $f = {\NN}x$ and $g = {\NN}y$.  Then
  $$ f \sim_{\LL} g  \, \textrm{ iff } \, x \sim_{\MM} y. $$
    \end{proposition}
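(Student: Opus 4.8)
The plan is to reduce the entire statement to the single algebraic identity $\NN\MM = \LL\MM^{-1}\MM = \LL$, equivalently $\LL^{-1}\NN = \MM^{-1}$, which is available because both $\LL$ and $\MM$ are invertible (recall an $\MM$-matrix is non-singular by Definition~\ref{def:M}). The whole point is that the \emph{same} integer vector $z$ witnesses both equivalences: one never has to produce a new $z$ when passing between the $\sim_{\LL}$ and $\sim_{\MM}$ worlds, since $\NN$ conjugates one firing move into the other. Consequently the integrality and non-negativity hypotheses packaged into $S^+$ and $R^+$ play no role in the equivalence itself; they merely guarantee that $f,g,x,y$ are legitimate members of the relevant sets, and I will only use the defining relations $f=\NN x$ and $g=\NN y$ together with invertibility.

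For the forward implication, I would assume $x \sim_{\MM} y$, so $x = y - \MM z$ for some $z \in \mathbb{Z}^n$, and left-multiply by $\NN$. Using $\NN\MM = \LL$ gives $\NN x = \NN y - \NN\MM z = \NN y - \LL z$, and since $f = \NN x$ and $g = \NN y$ this reads $f = g - \LL z$ with the \emph{same} $z \in \mathbb{Z}^n$; hence $f \sim_{\LL} g$. For the reverse implication, I would assume $f \sim_{\LL} g$, so $f = g - \LL z$ for some $z \in \mathbb{Z}^n$, substitute $f = \NN x$, $g = \NN y$, and left-multiply by $\LL^{-1}$ (legitimate because $\LL$ is invertible). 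Using $\LL^{-1}\NN = \MM^{-1}$ yields $\MM^{-1}x = \MM^{-1}y - z$, and a final left-multiplication by $\MM$ gives $x = y - \MM z$, i.e. $x \sim_{\MM} y$, again with the identical $z$.

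There is no genuine obstacle here: the argument is a two-line computation in each direction, and the only points deserving a word of care are that $\LL^{-1}$ exists (hypothesis) and that $\MM^{-1}$ exists (non-singularity of the $\MM$-matrix $\MM$), both of which are given. The conceptual content worth flagging in the write-up is that $z \mapsto z$ is the bijection realizing the correspondence of firing moves, which is exactly the mechanism that will let the later theorems transport criticality, superstability, and energy minimization from the well-understood $\MM$-matrix dynamics on $R^+$ back to the $\LL$-dynamics on $S^+$.
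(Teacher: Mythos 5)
Your proof is correct and follows essentially the same route as the paper's: both reduce the statement to the identity $\NN\MM = \LL$ (equivalently $\NN^{-1}\LL = \MM$) and observe that the \emph{same} integer vector $z$ witnesses both equivalences. The paper writes out only the $f \sim_{\LL} g \Rightarrow x \sim_{\MM} y$ direction by multiplying through by $\NN^{-1}$, with the converse implicit since every step is reversible; your write-up just makes both directions explicit.
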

\begin{proof}   Suppose that $f \sim_{\LL} g$ then we have the following relations:
  \begin{align*}
    f &= g - {\LL}z\\
    {\NN}x &= {\NN}y - {\LL}z\\
    \NN^{-1}{\NN}x &= \NN^{-1}{\NN}y - \NN^{-1}{\LL}z\\
    x &= y - \MM\LL^{-1}{\LL}z\\
    x &= y - {\MM}z.
    \end{align*}
  \end{proof}

Hence we see that (multi-)firing on configurations in $S^+$ via $\LL$
is equivalent to (mulit-)firing on configurations in $R^+$ via $\MM$.
The chip-firing dynamics in the second case are not the same as
chip-firing on $\MM$-matrices as in~\cite{GK}, where the set of valid
configurations is always $\mathbb{Z}_{\geq 0}^n$.  However, we will
see that much of the nice behavior carries over when considering a
general $R^+$.
We end this section with an example.

\begin{example} \label{ex:running}
  Let
\[
\LL = \begin{pmatrix*}[r]
  2 & -1 & 1 \\
  -1 & 2 & -1 \\
  1 & -1 & 2
\end{pmatrix*}
\textrm{ and }
\MM = \begin{pmatrix*}[r]
  3 & -1 & -1 \\
  -1 & 3 & -1 \\
  -1 & -1 & 3
\end{pmatrix*}.
\textrm{ Then }
\NN = \begin{pmatrix*}[r]
  1 & .25 & .75 \\
  -.25 & .5 & -.25 \\
  .75 & .25 & 1
\end{pmatrix*}.
\]
The valid configurations, $S^+$, consist of integer points of the form ${\NN}x$
such that $x \geq 0$. Vectors in $S^+$ include
$ \begin{pmatrix*}[r] 0 \\ 0 \\ 0 \end{pmatrix*}$,
$ \begin{pmatrix*}[r] 1 \\ 0 \\ 1 \end{pmatrix*}$ and
$ \begin{pmatrix*}[r] 3 \\ -1 \\ 4 \end{pmatrix*}$.

\noindent On the other hand,
$\begin{pmatrix*}[r] 0 \\ 0 \\1 \end{pmatrix*}$ and   $\begin{pmatrix*}[r] 1 \\ -1 \\1 \end{pmatrix*}$
are not in $S^+$ since \\
$\NN^{-1} \begin{pmatrix*}[r] 0 \\ 0 \\1 \end{pmatrix*}$ =
$\begin{pmatrix*}[r] -1.75 \\ .25 \\ 2.25 \end{pmatrix*} \ngeq 0$ and 
$ \NN^{-1} \begin{pmatrix*}[r] 1 \\ -1 \\1 \end{pmatrix*}$ = $\begin{pmatrix*}[r] .75 \\ -1.25 \\ .75 \end{pmatrix*} \ngeq 0$.
\end{example}

\section{Criticality} \label{sec:critical}

We first investigate the long term behavior of successively firing one state at a time.   Assume throughout that the total number of sites in the system is $n$ and let $e_i$ be the $i$th standard basis vector of $\mathbb{R}^n$.    A site is stable if no individual site can validly fire:  
\begin{definition} A configuration $f \in S^+$ is \emph{stable} if $f-{\LL}e_i \notin S^+$ for $1 \leq i \leq n$. 
\end{definition}
\begin{definition}   A configuration $x
\in R^+ $ is \emph{stable} if $x-{\MM}e_i \notin R^+$ for $1 \leq i \leq n$.
\end{definition}
Note that if $f={\NN}x$ with $f \in S^+$ and $x \in R^+$, then $f$ is
stable if and only if $x$ is stable.

\begin{definition} A configuration $f \in S^+$ is \emph{reachable} if there exists a configuration $g \in S^+$ such that
  \begin{itemize}
  \item[(i)] $g - {\LL}e_i \in S^+$ for $1 \leq i \leq n$ \textrm{ and}
  \item[(ii)] $ f = g - \sum_{j = 1}^k {\LL}e_{i_j}  \textrm{ such that } g - \sum_{j = 1}^l {\LL}e_{i_j} \in S^+ \textrm{ for all } l \leq k.$
    \end{itemize}
\end{definition}
Namely, a configuration $f$ is reachable if (i) there exists a sufficiently large configuration $g$ such that (ii) $f$ can be reached from $g$ by a sequence of valid individual firings.  A configuration in $R^+$ is \emph{reachable} if there exists a configuration $y \in R^+$ that satisfies the analogous conditions (i) and (ii) under the $\MM$ operator.

\begin{definition}
  A configuration in $S^+$ or $R^+$ is \emph{critical} if it is both stable and reachable.
\end{definition}
Note that if $f={\NN}x$ with $f \in S^+$ and $x \in R^+$, then $f$ is
critical if and only if $x$ is critical.

The original work arising from the sandpile literature focused on
critical configurations and the phenomenon of \emph{self-organized
  criticality}.  Starting at any sufficiently large initial
configuration and performing valid individual firings, a stable
configuration is eventually obtained, i.e. there are no infinite
sequences of valid firings (condition (a) from
Definition~\ref{def:M}).  Moreover, from a fixed initial
configuration, the same stable configuration is always reached
regardless of the choice and ordering of the individual firings.
Equivalently, critical configurations exist and are unique per
equivalence class under the equivalence relation defined by the image
of the toppling matrix.  This fundamental behavior was considered by
Bak, Tang and Wiesenfeld~\cite{BTW} for grid graph networks, by
Dhar~\cite{Dhar} and Speer~\cite{Speer} for general graphical networks and
Gabrielov~\cite{gabe} for $\MM$-matrix systems.  The behavior
continues to hold in our framework and follows from checking that the
results of~\cite{Dhar, gabe}  extend to the case of restricted valid
configurations.

\begin{theorem}[See {\cite[Theorems 1.2, 2.9]{gabe}} ] \label{thm:critical} Let $(\LL,\MM)$ be a fixed pairing of an invertible integer matrix $\LL$ and $\MM$-matrix $\MM$.  Then, in $S^+$ (resp. $R^+$), critical configurations exist and are unique per equivalence class under the relation $\sim_{\LL}$ (resp. $\sim_{\MM}$).  
\end{theorem}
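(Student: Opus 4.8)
The plan is to transport the entire problem from $S^+$ under $\LL$ to $R^+$ under $\MM$, where the hypotheses on the $\MM$-matrix $\MM$ become available, and there to run the classical Gabrielov--Dhar argument, checking only those steps that use the shape of the set of valid configurations. The bijection $x \mapsto \NN x$ from $R^+$ to $S^+$ intertwines firings, since $\NN(x - \MM e_i) = \NN x - \LL\MM^{-1}\MM e_i = (\NN x) - \LL e_i$; together with the remarks recorded after each definition (that stability, reachability, and criticality are preserved) and the Proposition of Section~\ref{sec:general} (that it matches $\sim_\MM$ with $\sim_\LL$), this reduces the theorem to the single statement: in $R^+$, critical configurations exist and are unique per $\sim_\MM$-class. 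From now on I work there.

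First I would record two structural facts about firing in $R^+$. Because $\NN(x-\MM e_i) = \NN x - \LL e_i$ is automatically integral whenever $\NN x$ is, the lattice constraint $\NN x \in \mathbb{Z}^n$ is preserved under any firing, so the only binding requirement for firing site $i$ is $x - \MM e_i \geq 0$; using $\MM_{ki} \leq 0$ for $k \neq i$ together with $x \geq 0$, this collapses to the single inequality $x_i \geq \MM_{ii}$. Next, termination: for a firing vector $z \in \mathbb{Z}^n_{\geq 0}$ carrying $x_0$ to $x_0 - \MM z \geq 0$ we have $\MM z \leq x_0$, hence $z \leq \MM^{-1} x_0$ by condition (b) of Definition~\ref{def:M}, so $z$ is bounded and only finitely many firings can occur. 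Crucially this bound uses only non-negativity and $\MM^{-1} \geq 0$, never that valid configurations fill the whole orthant, so avalanche-finiteness survives the passage to $R^+$.

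The abelian property comes next. If sites $i \neq j$ are both ready at $x$, then firing $j$ changes coordinate $i$ by $-\MM_{ij} \geq 0$, so it cannot disable $i$; hence $i$ and $j$ may be fired in either order, both orders are valid, and both reach $x - \MM e_i - \MM e_j$. This local confluence together with termination yields, by Newman's lemma, global confluence: stabilization is independent of the order of firings. For existence of a critical representative in a class $C$, I would produce a large seed inside $C$: the class is a coset of the full-rank lattice $\MM\mathbb{Z}^n$ (which lies in $\NN^{-1}\mathbb{Z}^n$ because $\LL$ is integral), and a translate of a full-rank lattice meets every box $[R,2R]^n$ once $R$ is large, so $C$ contains a point $g$ with all coordinates $\geq R$; for $R$ exceeding all entries of $\MM$ this $g$ has every site ready, so it satisfies condition (i) of reachability. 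Stabilizing $g$ terminates at some stable $s \sim_\MM g$, which is then reachable and stable, i.e. critical, and lies in $C$.

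The delicate step, and the one I expect to be the main obstacle, is uniqueness: that a $\sim_\MM$-class cannot contain two distinct critical configurations. This is exactly where the abelian property must be leveraged in the manner of Gabrielov~\cite{gabe} and Dhar~\cite{Dhar}: each critical $s$ is the order-independent stabilization of a large seed in its class, and one must show that any two large seeds in the same class stabilize to the same configuration, so that the critical element is forced to be unique. The point of the reduction is that this classical argument is purely combinatorial in the firing dynamics, and I have already verified its two load-bearing inputs, termination and confluence, continue to hold verbatim on $R^+$; the remaining work is to confirm that the resulting bijection between critical configurations and $\mathrm{coker}(\LL)$ is unaffected by the replacement of $\mathbb{Z}^n_{\geq 0}$ with the cone $R^+$, which it is precisely because that replacement changed only which firings are \emph{legal}, not the algebraic sign structure driving the argument.
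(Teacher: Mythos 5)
Your proposal takes essentially the same route as the paper's own proof, which is itself only a sketch: transport the problem to $R^+$ via $x \mapsto \NN x$, observe that $\NN(x-\MM e_i) = \NN x - \LL e_i$ so the integrality constraint is preserved by every firing and validity reduces to plain non-negativity, and then appeal to the classical avalanche-finite theory of \cite{gabe}. Where the paper stops at that single observation and cites Gabrielov's theorems for everything else, you re-derive several of the cited ingredients, and those derivations are correct: termination from $\MM^{-1}\geq 0$, the abelian property from local confluence plus Newman's lemma, and existence of an all-sites-ready seed in each class from a covering-radius argument (noting that $\MM\mathbb{Z}^n$ preserves $\NN$-integrality). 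This makes the existence half of the theorem essentially self-contained, which the paper's sketch is not.

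The one point to flag is your claim that termination and confluence are the only load-bearing inputs to the classical uniqueness argument; they are not. Termination plus confluence gives uniqueness of \emph{stabilization}, but to show that two equivalent all-ready seeds $g_1 \sim_{\MM} g_2$ stabilize to the same point, the classical argument must build a common configuration $h = g_1 + \MM v_1 = g_2 + \MM v_2$ with $v_1, v_2 \in \mathbb{Z}^n_{\geq 0}$ lying in $R^+$ (possible using characterization (c) of Definition~\ref{def:M}), and it then needs the lemma that a multifiring whose result has every site ready can be decomposed into legal single firings; only then does confluence force $\mathrm{stab}(g_1) = \mathrm{stab}(h) = \mathrm{stab}(g_2)$. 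That decomposition lemma does survive the passage to $R^+$: if no site of $T = \mathrm{supp}(v)$ were ready at $h$, one would get $\MM_T v_T < 0$ entrywise with $v_T > 0$, contradicting $\MM_T^{-1} \geq 0$ for the principal submatrix $\MM_T$, so the sign structure you invoke is indeed what saves the argument. But this third ingredient needed to be named and checked; as written, your uniqueness paragraph defers it to the citation with an inaccurate justification --- though that is, to be fair, the same level of deferral as the paper's own proof sketch, which simply asserts that non-negativity is the only property of configurations used in the referenced theorems.
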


\proofsketch
Suppose that $x \in R^+$. Then two conditions hold, ${\NN}x \in \mathbb{Z}^n$ and $x \geq 0$.  Firing a state $i$ yields the new configuration $x' = x - {\MM}e_i$.  Importantly, the new configuration $x'$ is such that ${\NN}x'$ remains integer.    To see this, expand  ${\NN}x'$ as follows: 
\begin{align*}
{\NN}x' &= {\NN}(x - {\MM}e_i)\\
&= {\LL}{\MM}^{-1}x - {\LL}{\MM}^{-1}{\MM}e_i\\
&= {\NN}x - {\LL}e_i. 
\end{align*}
${\NN}x$ is integer by assumption and ${\LL}e_i$ is integer because $\LL$ is an integer matrix.  Hence, starting at a configuration in $R^+$, checking to see if the resulting configuration is valid only requires checking if it remains non-negative.  This is the only condition used on the configurations in the referenced Theorems.  
\qed

Note, this implies that the number of critical configurations is equal to the det$(\LL)$, the size of the coker($\LL$).

\begin{example} \label{ex:critical}
 Returning to the pairing $(\LL,\MM)$ from Example~\ref{ex:running}, the critical configurations (of $S^+$) are:  $$\begin{pmatrix*}[r] 4 \\ -1 \\ 4 \end{pmatrix*}, \begin{pmatrix*}[r] 4 \\ 0 \\ 4 \end{pmatrix*}, \begin{pmatrix*}[r] 5 \\ 0 \\ 5 \end{pmatrix*}, \begin{pmatrix*}[r] 5 \\ -1 \\ 5 \end{pmatrix*}.$$ 
\end{example}

\section{Energy Minimization} \label{sec:energy}

In this section we define an energy form on configurations and
consider the relationship between chip-firing and energy as well as
the class of valid configurations which minimize energy.  These ideas
were first developed by Baker and Shokrieh in \cite{BS} and shown to
generalize to $\MM$-matrices in \cite{GK}.  Again, the difference in this context
is that the valid configurations are the points of 
$R^+$ and that $R^+$ is no longer necessarily equal to
$\mathbb{Z}_{\geq 0}^n$.  The technical results of this section will
allow us to establish the existence and uniqueness of superstable
configurations in the next section.  The presentation follows~\cite{GK}.

Given an $\MM$-matrix $\MM$ and a vector $q \in R^+$, define the following energy,
\begin{equation} E(q) = ||\MM^{-1}q||^2_2.
\end{equation}
Below we prove that $E(q)$ has a unique minimizer per equivalence class, see Theorem~\ref{thm:unique}.
To prove this claim, we relate energy to chip-firing in the following two Lemmas. First we make a definition.   Given $z \in \mathbb{Z}^n$ define $z^+ \in \mathbb{Z}^n_{\ge 0}$ by
$z^+_i = z_i$ if $z_i \ge 0$ and $0$ otherwise.  
\begin{lemma}\label{old3.2}
If $x,y \in R^+$ and $y = x - {\MM}z$ then the configuration $h = x - {\MM}z^+$ is also in $R^+$.
\end{lemma}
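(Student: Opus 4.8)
The plan is to verify directly that $h = x - \MM z^+$ satisfies the two defining conditions of membership in $R^+$, namely integrality of $\NN h$ and non-negativity of $h$. The integrality is immediate and is essentially the computation already carried out in the proof sketch of Theorem~\ref{thm:critical}. Since $\NN \MM = \LL$, we have $\NN h = \NN x - \NN \MM z^+ = \NN x - \LL z^+$; the first term is an integer vector because $x \in R^+$, and the second is an integer vector because $\LL$ has integer entries and $z^+ \in \mathbb{Z}^n$. Hence $\NN h \in \mathbb{Z}^n$.

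The real content of the argument is the non-negativity $h \geq 0$, which is where the $\MM$-matrix sign structure enters. The idea is to split the indices according to the sign of $z$. I would write $z = z^+ - z^-$, where $z^-_i = -z_i$ when $z_i < 0$ and $z^-_i = 0$ otherwise, so that $z^+$ and $z^-$ are non-negative vectors with disjoint support. The key device is to keep two equivalent expressions for $h$ at hand: the given form $h = x - \MM z^+$, and, since $y = x - \MM z = h + \MM z^-$, the dual form $h = y - \MM z^-$.

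For an index $i$ with $z_i \leq 0$ we have $z^+_i = 0$, so $(\MM z^+)_i = \sum_{j \neq i} \MM_{ij} z^+_j \leq 0$, using that the off-diagonal entries of an $\MM$-matrix are non-positive while $z^+ \geq 0$; therefore $h_i = x_i - (\MM z^+)_i \geq x_i \geq 0$. Symmetrically, for an index $i$ with $z_i > 0$ we have $z^-_i = 0$, so $(\MM z^-)_i \leq 0$ by the same reasoning, and hence $h_i = y_i - (\MM z^-)_i \geq y_i \geq 0$. Every index falls into exactly one of these two cases, so $h \geq 0$, which together with the integrality of $\NN h$ shows $h \in R^+$.

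I do not expect a genuine obstacle in this lemma; the only point requiring care is the bookkeeping of the sign decomposition together with the observation that on the support of $z^+$ one must switch to the form $h = y - \MM z^-$ (rather than $h = x - \MM z^+$) in order to exploit $y \geq 0$. Note also that the positive diagonal condition $\MM_{ii} > 0$ is deliberately never invoked: by arranging that the relevant coordinate of the multiplying vector vanishes in each case, the estimate uses only the off-diagonal sign condition $\MM_{ij} \leq 0$ for $i \neq j$.
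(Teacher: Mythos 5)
Your proposal is correct and takes essentially the same route as the paper: the integrality step is exactly the computation the paper borrows from the proof sketch of Theorem~\ref{thm:critical}, and your sign-splitting argument for $h \geq 0$ (using $h = x - \MM z^+$ on indices where $z_i \leq 0$ and $h = y - \MM z^-$ on indices where $z_i > 0$) is precisely the argument of \cite[Lemma 3.2]{GK}, which the paper simply cites as applying verbatim.
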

\begin{proof}
To show that $h$ is in $R^+$, we need to show that $h \geq 0$ and ${\NN}h \in \mathbb{Z}^n$.  
The non-negativity of $h$ follows verbatim from ~\cite[Lemma 3.2]{GK}.
Integrality follows as in Theorem~\ref{thm:critical}. 
\end{proof}
The next result appears as Lemma 3.3 in~\cite{GK}.  It does not consider the domain of the two configurations in question and holds in our more general set-up. 
\begin{lemma}\cite[Lemma 3.3]{GK} \label{old3.3}
Let $\MM$ be an $\MM$-matrix and $y=x-{\MM}z$, then
\begin{equation*}
E(y)=E(x)+z^t z-2z^t \MM^{-1}x=E(x)-z^tz-2z^t\MM^{-1}y. 
\end{equation*}
\end{lemma}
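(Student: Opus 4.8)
The plan is to prove the identity by a direct expansion of the squared norm defining the energy; no property of $\MM$-matrices beyond invertibility is actually needed, so this is a purely algebraic computation.

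First I would convert the given relation $y = x - \MM z$, which involves the matrix $\MM$, into a clean additive relation between the transformed vectors. Applying $\MM^{-1}$ and using $\MM^{-1}\MM z = z$ yields
\[
\MM^{-1} y = \MM^{-1} x - z.
\]
This single identity is the crux of the argument. Substituting it into $E(y) = (\MM^{-1}y)^t(\MM^{-1}y)$ and expanding the inner product gives
\[
E(y) = (\MM^{-1}x - z)^t(\MM^{-1}x - z) = ||\MM^{-1}x||^2_2 - 2 z^t \MM^{-1} x + z^t z,
\]
and recognizing the first term as $E(x)$ establishes the first claimed equality $E(y) = E(x) + z^t z - 2 z^t \MM^{-1} x$.

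For the second equality I would re-use the same identity, this time solved for $\MM^{-1} x$, namely $\MM^{-1} x = \MM^{-1} y + z$. Replacing the cross term via $z^t \MM^{-1} x = z^t \MM^{-1} y + z^t z$ in the expression just obtained gives
\[
E(y) = E(x) + z^t z - 2\bigl(z^t \MM^{-1} y + z^t z\bigr) = E(x) - z^t z - 2 z^t \MM^{-1} y,
\]
which is the desired second form.

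I do not expect a genuine obstacle here: the statement is an algebraic identity that holds for any invertible $\MM$, and the $\MM$-matrix structure is irrelevant to it (that structure is what made the preceding results nontrivial, but not this bookkeeping lemma). The only point requiring care is the sign tracking when passing from the $x$-form to the $y$-form; since $z^t \MM^{-1} x$ and $z^t \MM^{-1} y$ are scalars, no subtlety arises from the symmetry of the Euclidean inner product, and the two displayed substitutions above suffice.
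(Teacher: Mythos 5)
Your proof is correct. The paper does not actually reproduce a proof of this lemma---it cites \cite[Lemma 3.3]{GK} and merely remarks that the identity is domain-independent and so carries over to the general setting---and your direct expansion of $E(y)=\|\MM^{-1}y\|_2^2$ via $\MM^{-1}y=\MM^{-1}x-z$ is exactly the standard computation underlying that citation; your observation that only invertibility of $\MM$ is used is precisely why the paper can invoke the lemma unchanged in its more general setup.
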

We can now prove the uniqueness of energy minimizers within the valid
configurations of an equivalence class under the relation defined by an $\MM$-matrix $\MM$.  

\begin{theorem} \label{thm:unique}
Let $x$ be a configuration in $R^+$, then the minimization problem 
\begin{equation}\label{Eq:min} \min_{y \sim_{\MM} x, \, \, y \in R^+}   \|\MM^{-1} y\|_{2}^2 \end{equation} has a unique minimizer.  
\end{theorem}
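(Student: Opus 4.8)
The plan is to establish existence and uniqueness separately, using Lemma~\ref{old3.3} as the main algebraic engine. First I would address \emph{uniqueness}. Suppose $y$ and $w$ are both minimizers in the class $\{y \sim_{\MM} x, y \in R^+\}$, so $E(y) = E(w)$ and $w = y - {\MM}z$ for some $z \in \mathbb{Z}^n$. Applying Lemma~\ref{old3.3} in its symmetric form gives $E(w) = E(y) + z^tz - 2z^t\MM^{-1}y$ and also $E(y) = E(w) - z^tz - 2z^t\MM^{-1}w$. Since $E(y) = E(w)$, the first identity forces $z^tz = 2z^t\MM^{-1}y$, and the second forces $z^tz = -2z^t\MM^{-1}w$. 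Adding these yields $2z^tz = 2z^t\MM^{-1}(y - w) = 2z^t\MM^{-1}({\MM}z) = 2z^tz$, which is a tautology and gives nothing directly, so the sharper route is to compare $E(w)$ against the intermediate configuration $h = y - {\MM}z^+$ produced by Lemma~\ref{old3.2}. I expect the clean argument to be: any nonzero $z$ splitting into $z^+$ and $z^- = z^+ - z$ produces, via the energy identity applied along $z^+$ alone, a strict decrease in energy unless $z^+ = 0$, and symmetrically for $z^-$, forcing $z = 0$ and hence $w = y$.

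For \emph{existence}, the plan is to show the minimization is effectively over a finite set. Because $E(y) = \|\MM^{-1}y\|_2^2 \geq 0$, the infimum over the equivalence class intersected with $R^+$ exists as a nonnegative real number; I would show it is attained. The key observation is that the level sets $\{y \in R^+ : E(y) \leq C\}$ are bounded (since $\MM$ is invertible, $\|\MM^{-1}y\|_2$ controls $\|y\|$) and consist of integer translates, so within any bounded region the equivalence class contains only finitely many members of $R^+$; minimizing a continuous function over a finite nonempty set attains its minimum. The set is nonempty since $x$ itself lies in it.

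The main obstacle I anticipate is handling the nonnegativity constraint $y \in R^+$ carefully in the uniqueness step: the energy identity of Lemma~\ref{old3.3} holds for \emph{any} integer $z$, but to conclude that energy strictly decreases I must stay inside $R^+$, which is exactly what Lemma~\ref{old3.2} guarantees for the positive part $z^+$. The delicate point is arguing that if $z \neq 0$ then either firing along $z^+$ from one minimizer or along $z^-$ from the other strictly lowers the energy, contradicting minimality; this requires showing $z^t z - 2z^t\MM^{-1}x$ cannot vanish for the relevant $z$ while both endpoints stay admissible. I would lean on the positivity of $z^tz$ for $z \neq 0$ together with the sign structure forced by $z^+$ versus $z^-$ to rule out a tie, mirroring the argument in~\cite{GK} but now over the general cone $R^+$ rather than $\mathbb{Z}_{\geq 0}^n$.
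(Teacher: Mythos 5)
Your proposal is correct, and its core coincides with the paper's own argument: both use Lemma~\ref{old3.2} to place $h = y - \MM z^+$ back in $R^+$, then the energy identity of Lemma~\ref{old3.3} together with the nonnegativity of $\MM^{-1}$, $h$, and $z^+$ to get $E(h) \leq E(y) - (z^+)^t(z^+)$, so that minimality forces $z^+ = 0$. Where you diverge is the endgame. You propose to rerun the identical argument with the two minimizers swapped, so that $(-z)^+ = z^-$ is killed by the minimality of the other minimizer, giving $z = z^+ - z^- = 0$; the paper instead finishes asymmetrically: once $z^+ = 0$, i.e. $z \leq 0$, it applies Lemma~\ref{old3.3} directly to the pair of minimizers and reads off $E(w) = E(v) + z^t z - 2 z^t \MM^{-1} v$, which strictly exceeds $E(v)$ unless $z = 0$, since $z^t z > 0$ and $-2 z^t \MM^{-1} v \geq 0$ when $z \leq 0$. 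The two finishes are equally short; yours uses one mechanism (minimality against a Lemma~\ref{old3.2} configuration) twice, while the paper's second step is a direct sign analysis, so your worry about "ruling out a tie" dissolves --- no strictness is needed in the first step, only $(z^+)^t(z^+) \leq 0$. The more substantive difference is that you prove \emph{existence}, via boundedness of the level sets of $E$ and discreteness of the coset $x + \MM\mathbb{Z}^n$, which makes the feasible set within any energy bound finite and nonempty (it contains $x$). The paper's proof establishes only uniqueness and is silent on existence, even though the theorem asserts the problem \emph{has} a unique minimizer; so this part of your plan is not redundant but fills in a step the paper leaves implicit.
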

\begin{proof}
Suppose $w, v \in R^+$ and  $w \sim_{\MM} x$ and $v \sim_{\MM} x$ both minimize
Equation~\ref{Eq:min}.  Then $w \sim_{\MM} v$ and there exists $z$ such
that $w = v - {\MM}z$.  Since $w$ is in $R^+$, Lemma~\ref{old3.2} gives
that $h = v - {\MM}z^+$ is also in $R^+$.  
Now, Lemma~\ref{old3.3} gives:
$$ E(h) = E(v) - (z^+)^t(z^+) - 2(z^+)^t\MM^{-1}h.$$ The last quantity,
$2(z^+)^t\MM^{-1}h$,  is positive.  To see this, note that $h \in R^+$
implies $h \geq 0$, $\MM^{-1}$ is non-negative matrix, and $z^+$ is an all non-negative vector.  Therefore, 
$$ E(h) \leq E(v) - (z^+)^t(z^+).$$
On the other hand, $v$ is an energy minimizer, therefore $z^+ = 0$.  
Now consider the relation between $w$ and $v$, the two supposed distinct minimizers,  
$$E(w) = E(v) + z^tz - 2z^t\MM^{-1}v.$$
If $z \leq 0$, then $z^tz$ is positive, $2z^t\MM^{-1}v$ is negative, and $E(v) < E(w)$ unless $z = 0$.  Thus $z=0$ and $w = v$.     
\end{proof}
\begin{corollary}\label{uniquef}
Let $f$ be a configuration in $S^+$, then the minimization problem
\begin{equation}\label{Eq:minS} \min_{g \sim f, g \in S^+}   \|{\LL}^{-1} g\|_{2}^2 \end{equation} has a unique minimizer.  
\end{corollary}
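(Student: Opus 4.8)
The plan is to reduce Corollary~\ref{uniquef} directly to Theorem~\ref{thm:unique} via the correspondence between $S^+$ and $R^+$ induced by $\NN$. Given $f \in S^+$, there is a unique $x \in R^+$ with $f = \NN x$, since $\NN$ is invertible; so I would set $x := \NN^{-1} f$ and verify $x \in R^+$. The first key observation is that the energy functionals match up under this correspondence: because $\NN = \LL \MM^{-1}$, we have $\MM^{-1} x = \MM^{-1}\NN^{-1} f = \MM^{-1}\MM \LL^{-1} f = \LL^{-1} f$, so $\|\MM^{-1} x\|_2^2 = \|\LL^{-1} f\|_2^2$. More generally, for any $g \in S^+$ with associated $y = \NN^{-1} g \in R^+$, the energy $E(y) = \|\MM^{-1} y\|_2^2$ equals $\|\LL^{-1} g\|_2^2$. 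Hence minimizing $\|\LL^{-1} g\|_2^2$ over $g \in S^+$ is the same optimization as minimizing $E(y)$ over $y \in R^+$, carried across the bijection $g \mapsto \NN^{-1} g$.

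The second key observation is that this bijection respects the two equivalence relations. By the Proposition proved earlier in the excerpt, for $f = \NN x$ and $g = \NN y$ with $f,g \in S^+$ and $x,y \in R^+$, we have $f \sim_{\LL} g$ if and only if $x \sim_{\MM} y$. Therefore the constraint set $\{g \sim f, \; g \in S^+\}$ maps bijectively onto the constraint set $\{y \sim_{\MM} x, \; y \in R^+\}$ appearing in Theorem~\ref{thm:unique}. Combining the two observations: the minimization problem \eqref{Eq:minS} over $S^+$ is carried, via the energy-preserving, equivalence-preserving bijection $\NN^{-1}$, exactly onto the minimization problem \eqref{Eq:min} over $R^+$ in Theorem~\ref{thm:unique}. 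Since the latter has a unique minimizer, so does the former, and the minimizer of \eqref{Eq:minS} is precisely $\NN$ applied to the minimizer of \eqref{Eq:min}.

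I anticipate no serious obstacle; the work is almost entirely bookkeeping to confirm that the dictionary $g \leftrightarrow \NN^{-1} g$ is a well-defined bijection between the two valid-configuration sets (which follows from invertibility of $\NN$ together with the defining conditions of $S^+$ and $R^+$) and that it transports both the objective and the equivalence classes correctly. The one point deserving a line of care is the identity $\MM^{-1}\NN^{-1} = \LL^{-1}$, which hinges on $\NN = \LL\MM^{-1}$ and hence $\NN^{-1} = \MM \LL^{-1}$; once this is in hand the energy identity $\|\MM^{-1} y\|_2^2 = \|\LL^{-1} g\|_2^2$ is immediate and the rest is a translation of Theorem~\ref{thm:unique}. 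Thus the proof is essentially a three-step reduction: establish the bijection, match the energies, match the equivalence classes, then invoke Theorem~\ref{thm:unique}.
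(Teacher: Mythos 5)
Your proposal is correct and is essentially the paper's own argument: both reduce \eqref{Eq:minS} to \eqref{Eq:min} via the correspondence $g \mapsto \NN^{-1}g$ between $S^+$ and $R^+$, the identity $\MM^{-1}\NN^{-1} = \LL^{-1}$ (the paper uses it as $\|\LL^{-1}\NN x\|_2 = \|\MM^{-1}x\|_2$), the Proposition matching $\sim_{\LL}$ with $\sim_{\MM}$, and then an appeal to Theorem~\ref{thm:unique}. The only cosmetic difference is that you package this as an energy- and equivalence-preserving bijection of the two optimization problems, whereas the paper starts from two putative minimizers of \eqref{Eq:minS} and checks that their preimages minimize \eqref{Eq:min}.
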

\begin{proof}
Suppose $h,k \in S^+$ and  $h \sim_{\LL} f$ and $k \sim_{\LL} f$ both minimize
Equation~\ref{Eq:minS}.  Since $h,k \in S^+$, $h = {\NN}x$ and
$k = {\NN}y$ for some $x$ and $y$ in $R^+$.  Furthermore, $h
\sim_{\LL} k$ implies that $x \sim_{\MM} y$.

First note that if $h$ and $k$ are both minimizers, then $x$ and
$y$ are both minimizers:  If $x$ is not a minimizer then there
exists an $m$ which is a minimizer such that $m = x - {\MM}z$.
Multiplying by $\NN$ gives ${\NN}m = {\NN}x - {\NN}{\MM}z \implies g = h - {\LL}z$, where
$g = {\NN}m \in S^+$.  But then $E(g) < E(h)$ contradicting the
minimality of $h$.

 On the other hand, $\|{\LL}^{-1} h \|_2^2 = \|{\LL}^{-1} k \|_2^2 $ and
 therefore $\|{\LL}^{-1} {\NN}x \|_2^2 = \|{\LL}^{-1} {\NN}y \|_2^2 $,
 i.e. $$\|{\MM}^{-1}x \|_2^2 = \|{\MM}^{-1}y \|_2^2.$$ But
 Equation~\ref{Eq:min} has a unique minimizer by
 Theorem~\ref{thm:unique}.
\end{proof}

\section{Superstability} \label{sec:superstable}

Recall that a stable configuration $\cc$ is a valid configuration in
which no (individual) site can fire such that the resulting
configuration is still valid, i.e. $\cc' = {\LL}e_i \notin S^+$ for
all $i$.  Superstability extends this notion to account for subsets of sites
firing simultaneously and with multiplicity.  
\begin{definition} A configuration $f \in S^+$ is \emph{superstable} if $f-{\LL}z \notin S^+$ for every
$z \in \mathbb{Z}^n$, $z \ge 0$ and $z \neq 0$. 
\end{definition}
\begin{definition}   A configuration $x
\in R^+$ is \emph{superstable} if $x-{\MM}z \notin R^+$ for every $z \in
\mathbb{Z}^n$, $z \ge 0$ and $z \neq 0$.
\end{definition}
Note that if $f={\NN}x$ with $f \in S^+$ and $x \in R^+$, then $f$ is
superstable if and only if $x$ is superstable.

\begin{example}\label{graphexample}

  Let $\LL$ be the reduced graph Laplacian of the graph in
  Figure~\ref{fig:SS} where we have deleted the row and column of the Laplacian corresponding to the sink vertex. Let $\MM = \LL$.  This choice for $\MM$ is
  valid because reduced graph Laplacians are themselves $\MM$-matrices, see Section~\ref{graphs}.  Under
  the pairing $(\LL, \LL)$, the valid configurations are all
  non-negative vectors, $S^+ = \mathbb{Z}^3_{\geq 0}$.  The
  configuration shown in Figure~\ref{fig:SS} is stable - no individual
  site can fire with the resulting configuration remaining valid.  The
  configuration is however not superstable - the two sites containing $2$
  chips can simultaneously fire with the resulting configuration remaining valid.  
\end{example}
\begin{figure}[h] 
  \includegraphics[width=1.8in]{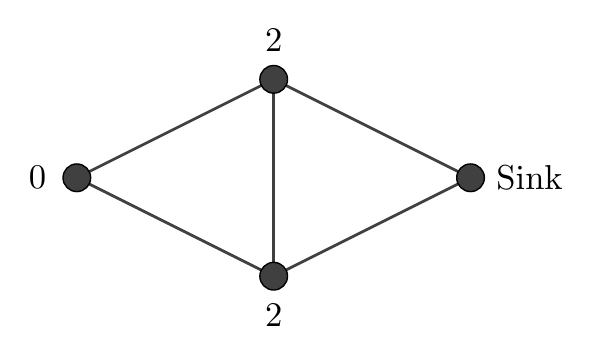}
  \caption{A stable but not superstable configuration.}
 \label{fig:SS}
\end{figure}
\begin{remark}In the graphical case, if the graph is an Eulerian
directed graph, then  it is sufficient to consider subset firing
without multiplicity; i.e., in the definition above, $\mathbb{Z}^n$
may be replaced by $\{0,1\}^n$.  The more general definition above is necessary for
any non-Eulerian graph or more generally for any $\MM$-matrix, see
\cite[Section 4.1]{GK}.
\end{remark}

As with critical and energy minimizing configurations, superstable
configurations exist and are unique per equivalence class of the coker($\LL$).  We
establish this by showing that $x \in R^+$ is superstable if and only
if it is a minimizer of \eqref{Eq:min} and therefore $f \in S^+$ is
superstable if and only if it is a minimizer of \eqref{Eq:minS}.  The following appears essentially as Theorem 4.6 of~\cite{GK}, we reproduce it here for completeness.

\begin{theorem} \label{thm:SS}
Fix an invertible $n \times n$ matrix $\LL$ and an $n \times n$ $\MM$-matrix $\MM$.  A configuration $x \in R^+$ is superstable with respect to $\MM$ if and only if $x$ is an energy-minimizer, i.e., a minimizer of Equation~\ref{Eq:min}.  
\end{theorem}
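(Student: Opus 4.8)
The statement is an ``if and only if'' relating superstability to energy minimization, so the plan is to prove the two directions separately, both by contraposition, using the characterization of firing in $R^+$ established earlier. The key algebraic engine is Lemma~\ref{old3.3}, which records exactly how the energy $E$ changes under a firing $y = x - \MM z$, together with the fact that $\MM^{-1}$ has non-negative entries (Definition~\ref{def:M}(b)). Throughout I will use that for $x \in R^+$ we have $x \ge 0$ and $\NN x \in \mathbb{Z}^n$, and that firing preserves the integrality condition (as verified in the proof sketch of Theorem~\ref{thm:critical}), so that checking validity of a fired configuration reduces to checking non-negativity.

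First I would prove the contrapositive of ``superstable $\Rightarrow$ energy-minimizer'': suppose $x \in R^+$ is \emph{not} an energy minimizer. Then there is some $v \sim_{\MM} x$, $v \in R^+$, with $E(v) < E(x)$. Writing $x = v - \MM z$ for some $z \in \mathbb{Z}^n$, I would invoke Lemma~\ref{old3.2} to replace $z$ by $z^+$, producing $h = v - \MM z^+ \in R^+$. The point is that $h = x - \MM(z^+ - z)$ is reachable from $x$ by firing the non-negative integer vector $z^+ - z \ge 0$; I would check this vector is nonzero precisely because $x$ is assumed not minimal (if $z^+ - z = 0$ then $z \ge 0$, and then $E(x) \le E(v)$ by the monotonicity computation from Lemma~\ref{old3.3}, contradicting $E(v) < E(x)$). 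Hence some nonzero non-negative multifiring keeps us in $R^+$, so $x$ is not superstable.

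For the reverse direction I would prove the contrapositive of ``energy-minimizer $\Rightarrow$ superstable'': suppose $x \in R^+$ is not superstable, so there exists $z \in \mathbb{Z}^n$ with $z \ge 0$, $z \neq 0$, and $y := x - \MM z \in R^+$. Applying Lemma~\ref{old3.3} in the form $E(y) = E(x) - z^t z - 2 z^t \MM^{-1} y$, I observe that $z^t z > 0$ since $z \neq 0$, and $2 z^t \MM^{-1} y \ge 0$ because $z \ge 0$, $\MM^{-1} \ge 0$ entrywise, and $y \ge 0$. Therefore $E(y) < E(x)$, exhibiting a strictly lower-energy configuration in the same $\sim_{\MM}$ class, so $x$ is not an energy minimizer.

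The two directions together give the equivalence for $x \in R^+$; the corresponding statement for $f \in S^+$ then follows from the dictionary $f = \NN x$, under which superstability transfers by the remark preceding the theorem and energy minimization transfers by Corollary~\ref{uniquef}. The main obstacle is the first direction: the naive single-firing argument does not work because lowering energy may require a firing vector $z$ with mixed signs, and a configuration fired by such a $z$ need not be reachable by a \emph{non-negative} multifiring. The device that rescues the argument is the truncation $z \mapsto z^+$ supplied by Lemma~\ref{old3.2}, so the crux is to verify that $z^+ - z$ is a nonzero non-negative integer vector whose firing stays in $R^+$; once that is in place, the energy bookkeeping from Lemma~\ref{old3.3} closes both directions cleanly.
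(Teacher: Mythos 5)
Your proof is correct and takes essentially the same route as the paper: both directions rest on the truncation Lemma~\ref{old3.2}, the energy identity of Lemma~\ref{old3.3}, and the entrywise non-negativity of $\MM^{-1}$ applied to non-negative configurations. In fact your bookkeeping in the superstable-implies-minimizer direction is more careful than the paper's own write-up, which (under its convention $x = y - \MM z$) asserts that superstability forces $z^{+} = 0$, whereas what it actually forces is $z^{+} - z = 0$ (i.e.\ $z \ge 0$) --- precisely the non-negative firing vector $z^{+} - z$ that your argument identifies and uses.
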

\begin{proof}
Let $x$ be superstable.  Let $y \in R^+$ be equivalent to $x$.  Then $x = y - {\MM}z$ for some $z$ and $h = y - {\MM}z^+$ is also in $R^+$ by Lemma~\ref{old3.2}.  But $x$ is superstable and hence $z^+$ must be $0$.   Now we can compare the energies of $x$ and $y$:
$$E(y) = E(x) - z^tz - 2z^t\MM^{-1}w.$$
This implies $E(y) < E(x)$ which is a contradiction unless $z = 0$.  

Now let $x$ be an energy-minimizer.  Suppose $x$ is not superstable.  Then there exists $z_0$ such that $x - {\MM}z_0 \in R^+$ with $z_0 > 0$.  Let $y = x - {\MM}z_0 \in R^+$.  Comparing energies gives:
$$E(y) = E(x) - {z_0}^t{z_0} - 2z_0^t{\MM}^{-1}y.$$
This implies $E(y) < E(x)$ which is again a contradiction.  
\end{proof}

\begin{example} \label{ex:SS} Returning to the pairing $(\LL,\MM)$ from Example~\ref{ex:running}, the superstable configurations (of $S^+$) are:  $$\begin{pmatrix*}[r] 1 \\ 0 \\1 \end{pmatrix*}, \begin{pmatrix*}[r] 1 \\ 1 \\1 \end{pmatrix*}, \begin{pmatrix*}[r] 0 \\ 0 \\ 0 \end{pmatrix*}, \begin{pmatrix*}[r] 2 \\ 1 \\ 2 \end{pmatrix*}.$$ 
  \end{example}

  In the classical model, if $\LL$ is the
distribution matrix, define $D^{\LL}$ to be the vector given by
$D_i^{\LL} = {\LL}_{ii} - 1$ for all $i$.  Then a configuration $c$ is
critical if and only if $D^{\LL} - c$ is superstable. 

This duality does not hold in the more general case.  Moreover, there
seems to be no simple linear relation.  Consider, for example, the
critical and superstable configurations from Example~\ref{ex:critical}
and~\ref{ex:SS}.  Note that in $S^+$ there is a component-wise maximum
critical configuration, but subtracting in this configuration does not
give the collection of superstables.
\begin{question} Is there a duality between critical and superstable configurations?
\end{question}

\section{Special Cases} \label{sec:cases}
In this section, we consider three important special cases: (1) $\LL$ is a reduced graph Laplacian (2) $\MM$ is the identity matrix and (3) $\LL$ is a reduced combinatorial Laplacian of a simplicial complex.

\subsection{Graph Laplacians ($\LL,\LL$)}\label{graphs}
In classical chip-firing, for example as in \cite{Biggs, BLS, Dhar},
there is an underlying graph to the system.  States correspond to
vertices of the graph and chips are associated to vertices.  The
distribution dynamics are given by the graph Laplacian $\DD - \A$,
where $\DD$ is the diagonal matrix of degrees of vertices and $\A$ is
the adjacency matrix of the graph.  For a graph with $n$ vertices, the
graph Laplacian is an $n \times n$ {singular} matrix.  To fit our
context, one must introduce a sink vertex -- any vertex can be
declared the sink and then is no longer allowed to fire.  The distribution matrix is
 a \emph{reduced graph Laplacian},  the row and column corresponding to the sink is deleted. 
  The valid configurations consist of all
non-negative integer vectors.

This special case is recovered by first noting that reduced graph
Laplacians are in fact $\MM$-matrices~\cite{gabe, GK} and then pairing
the reduced Laplacian with itself.  Let $\LL$ be a reduced graph
Laplacian (or more generally any $\MM$-matrix). In this case, one can choose $\MM=\LL$ for the pairing
$(\LL,\LL)$.  Under this pairing,
$$ \NN = {\LL}{\LL}^{-1} = \I, \qquad S^+ = \mathbb{Z}^{n-1}_{\geq 0}, \qquad R^+ = \mathbb{Z}^{n-1}_{\geq 0} $$ and we recover the usual graphical chip-firing
dynamics.  The stable and superstable conditions of not leaving $S^+$
reduce to the conditions that any (multi-) firing should yield a
non-negative configuration.
Example~\ref{graphexample} uses the pairing $(\LL, \LL)$ on a small graph.

\subsection{Fundamental parallelepipeds ($\LL, \I$) } \label{lattice}
The identity matrix $\I$ is an $\MM$-matrix.  Therefore, for any fixed
$\LL$, we can always take the pairing $(\LL,\I)$. In fact, any
positive diagonal matrix $\DD$ is an $\MM$-matrix.

\begin{proposition}\label{prop:diagonal}
Suppose $\DD$ is a non-negative diagonal matrix.  If $\MM$ and ${\DD}{\MM}$ are $\MM$-matrices, then the superstable
and critical configurations of $S^+$ are the same under the pairing
$(\LL,\MM)$ and $(\LL, \DD{\MM})$.
  \end{proposition}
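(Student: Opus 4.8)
The plan is to show that the pairings $(\LL,\MM)$ and $(\LL,\DD\MM)$ give rise to the \emph{same} sets $S^+$ of valid configurations, and moreover that their critical and superstable configurations agree. The key observation is that $S^+$ depends only on the cone swept out by $\NN=\LL\MM^{-1}$ applied to the non-negative orthant, and that multiplying $\MM$ on the left by a positive diagonal matrix does not change this cone. Concretely, write $\NN = \LL\MM^{-1}$ for the $(\LL,\MM)$ pairing and $\widetilde{\NN} = \LL(\DD\MM)^{-1} = \LL\MM^{-1}\DD^{-1} = \NN\DD^{-1}$ for the $(\LL,\DD\MM)$ pairing. Since $\DD$ is a positive diagonal matrix, $\DD^{-1}$ is also positive diagonal, and the map $x \mapsto \DD^{-1}x$ is a bijection of $\mathbb{R}^n_{\geq 0}$ onto itself. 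Hence $\{\widetilde{\NN}x \mid x\geq 0\} = \{\NN\DD^{-1}x \mid x \geq 0\} = \{\NN x' \mid x' \geq 0\}$, which shows the two cones coincide. Intersecting with $\mathbb{Z}^n$ then gives that $S^+$ is identical for the two pairings.

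First I would make the bijection explicit at the level of the $R^+$ sets. For the pairing $(\LL,\MM)$ we have $R^+ = \{x\geq 0 \mid \NN x \in \mathbb{Z}^n\}$, and for $(\LL,\DD\MM)$ we have $\widetilde{R}^+ = \{\tilde{x}\geq 0 \mid \widetilde{\NN}\tilde{x}\in\mathbb{Z}^n\}$. The relation $\widetilde{\NN} = \NN\DD^{-1}$ shows that $x \in R^+$ if and only if $\DD x \in \widetilde{R}^+$, since $\DD x \geq 0 \iff x\geq 0$ and $\widetilde{\NN}(\DD x) = \NN x$. Thus the correspondence $x \mapsto \DD x$ is a bijection $R^+ \to \widetilde{R}^+$ fixing the common image point $\NN x = \widetilde{\NN}(\DD x) \in S^+$.

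Next I would transport the firing dynamics through this bijection. A single firing of site $i$ in the $(\LL,\DD\MM)$ system sends $\tilde{x}$ to $\tilde{x} - (\DD\MM)e_i$. Under the correspondence $x \leftrightarrow \tilde{x}=\DD x$, I claim this does \emph{not} in general correspond to a single $\MM$-firing, but the crucial point is that both dynamics induce the \emph{same} dynamics on $S^+$: firing site $i$ in either system subtracts $\LL e_i$ from the configuration $\NN x = \widetilde{\NN}\tilde{x}$, because $\NN\MM e_i = \LL e_i$ and $\widetilde{\NN}(\DD\MM)e_i = \NN\MM e_i = \LL e_i$. Since the definitions of stable, reachable, critical (Definition in Section~\ref{sec:critical}) and superstable (Definitions in Section~\ref{sec:superstable}) for configurations in $S^+$ are phrased \emph{entirely} in terms of subtracting $\LL e_i$ (or $\LL z$) and checking membership in $S^+$ — and since $S^+$ is the same set for both pairings — these notions coincide verbatim. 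Therefore the critical and superstable configurations of $S^+$ are identical under the two pairings.

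The main obstacle I anticipate is a bookkeeping subtlety rather than a conceptual one: one must verify that $\DD\MM$ really is an $\MM$-matrix (this is given as a hypothesis, so no work is needed) and, more importantly, that the set-theoretic identity $S^+ = \widetilde{S}^+$ is genuinely driven by the cone equality $\NN\mathbb{R}^n_{\geq 0} = \widetilde{\NN}\mathbb{R}^n_{\geq 0}$ and not by any finer structure of $\MM$. I would be careful to note that while $R^+$ and $\widetilde{R}^+$ differ (they are related by the scaling $\DD$), the sets $S^+$ of \emph{integer} configurations are literally equal, and it is on $S^+$ that criticality and superstability are defined via the common operator $\LL$. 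Since all four relevant notions reduce to statements about $S^+$ and the operator $\LL$, which are pairing-independent, the conclusion follows. The one place deserving explicit verification is that the equivalence relation $\sim_{\LL}$ is also pairing-independent — but this is immediate, as $\sim_{\LL}$ refers only to $\LL$ and $\mathbb{Z}^n$, not to $\MM$ at all.
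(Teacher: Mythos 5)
Your proposal is correct and follows essentially the same route as the paper: both establish $S^+ = S^+_{\DD}$ via the scaling $x \mapsto \DD x$ (using $\LL(\DD\MM)^{-1} = \NN\DD^{-1}$ and that $\DD$, being invertible since $\DD\MM$ is non-singular, maps the non-negative orthant bijectively onto itself), after which criticality and superstability coincide because they are defined purely in terms of $S^+$ and the operator $\LL$. Your write-up merely makes explicit the final step that the paper leaves implicit, namely that the dynamics on $S^+$ are pairing-independent.
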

\begin{proof}
  We will show that the collection of valid configurations is the same under the two pairings.  Let $S^+$ correspond to $(\LL,\MM)$ and $S^+_{\DD}$ correspond to $(\LL, {\DD}{\MM})$.

  Suppose $f \in S^+$, then $f = {\NN}x = {\LL}{\MM}^{-1}x$ for some $x \geq 0$.  For $f$ to be in $S^+_{\DD}$, we must have $f = {\LL}({\DD}{\MM})^{-1}y$ for some $y \geq 0$.  Solving for $y$, we see that:
\begin{align*}
    {\LL}{\MM^{-1}}{\DD^{-1}}y &= {\LL}{\MM^{-1}}x\\
    {\DD}^{-1}y &= x\\
    y &= {\DD}x. 
    \end{align*}  
Since $x \geq 0$ and ${\DD}$ is positive, ${\DD}x$ is non-negative and $f \in S^+_{\DD}$.

Similarly, suppose $g \in S^+_{\DD}$ then $g = {\LL}{\MM}^{-1}{\DD}^{-1}w$ for some $w \geq 0$.  In order for $g$ to be in $S^+$, the vector $v = {\DD}^{-1}w$ must be non-negative, which again holds because ${\DD}$ is a positive diagonal matrix. 
  \end{proof}

By
Proposition~\ref{prop:diagonal}, the critical and superstable
configurations of $S^+$ are the same for $(\LL, \I)$ and $(\LL, \DD)$.
For the remainder of the section we assume that $\MM = \I$ for simplicity.
Suppose that $\MM = \I$, then 
\begin{equation*}
 \NN = \LL   \textrm{ and } S^+=\{ {\LL}x \, |  \, {\LL}x \in \mathbb{Z}^n, x  \in \mathbb{R}^n_{\geq 0} \}.
\end{equation*}
Given an invertible matrix $\LL$, define 
the \emph{lattice} generated by the columns of
$\LL$: $$\mathcal{L}(\LL) = \{ {\LL}x \, | \, x \in \mathbb{Z}^n \}.$$
The \emph{fundamental parallelepiped} of $\mathcal{L}$ with respect to $\LL$ is:
$$ \mathcal{P}(\mathcal{L}(\LL)) = \{ {\LL}x \, | \, x \in \mathbb{R}^n, 0 \leq x_i < 1
\}.$$ It is well known that the integer points of the fundamental
parallelepiped of a full-rank lattice form a system of representatives for $\mathbb{Z}^n / {\textrm{im}} {\LL}$, see e.g.~\cite[Chapter 9]{sinai}. 
The next proposition shows that these points can be seen as stable configurations in a chip-firing model.

\begin{proposition}
  Under the pairing $(\LL, \I)$,  the critical and superstable configurations of $S^+$ coincide and are precisely the integer points of the fundamental parallelepiped $\mathcal{P}(\mathcal{L}(\LL))$.
    \end{proposition}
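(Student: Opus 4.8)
The plan is to transport everything to $R^+$ via the bijection induced by $\NN=\LL$, and then to exploit the fact that when $\MM=\I$ both the stability and the superstability conditions decouple across coordinates. First I would record the basic dictionary. With $\MM=\I$ we have $\NN=\LL$, the energy is $E(x)=\|x\|_2^2$, and $x\sim_{\I}y$ means $x-y\in\mathbb{Z}^n$. Since $\LL$ is invertible, $f\mapsto x=\LL^{-1}f$ is a bijection from $S^+$ onto $R^+$ (with inverse $x\mapsto\LL x$), and by the remarks following the stability/superstability/critical definitions, $f\in S^+$ has any of these properties iff $x=\LL^{-1}f\in R^+$ does. Moreover $f=\LL x$ is an integer point of $\mathcal{P}(\mathcal{L}(\LL))$ exactly when $0\le x_i<1$ for all $i$. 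Thus the whole proposition reduces to the single claim: a configuration $x\in R^+$ is superstable (equivalently, critical) if and only if $0\le x_i<1$ for every $i$.

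Next I would characterize stability and superstability in $R^+$ directly. The key point, exactly as in the proof of Theorem~\ref{thm:critical}, is that for any $z\in\mathbb{Z}^n$ the vector $\LL(x-z)=\LL x-\LL z$ is automatically integral, so membership of $x-\MM z=x-z$ in $R^+$ is governed solely by non-negativity. For a single firing $z=e_i$ this gives $x-e_i\in R^+ \iff x_i\ge 1$, so $x$ is stable iff $x_i<1$ for all $i$ (recalling $x\ge 0$). For a multifiring $z\in\mathbb{Z}^n_{\ge 0}$ with $z\neq 0$, if every $x_i<1$ then any index $i$ with $z_i\ge 1$ forces $(x-z)_i<0$, whence $x-z\notin R^+$; therefore stable $\Rightarrow$ superstable, and since superstable $\Rightarrow$ stable trivially, the two notions coincide and are characterized by $0\le x_i<1$. (Alternatively one can route this through Theorem~\ref{thm:SS}: minimizing $E(x-z)=\sum_i (x_i-z_i)^2$ over $z\in\mathbb{Z}^n$ subject to $x-z\ge 0$ decouples coordinatewise, and the minimum in each coordinate is attained precisely at the fractional part in $[0,1)$.)

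Finally I would pin down the critical configurations. A critical configuration is in particular stable, and by Theorem~\ref{thm:critical} critical configurations exist and are unique per $\sim_{\I}$-class. On the other hand, each $\sim_{\I}$-class is a coset of $\mathbb{Z}^n$ and contains exactly one representative with all coordinates in $[0,1)$, obtained by subtracting integer parts; this representative lies in $R^+$ (non-negativity is clear, and $\LL$ applied to it stays integral since it differs from $x$ by an element of $\mathbb{Z}^n$). By the previous paragraph it is the unique stable, hence the unique superstable, element of the class. Since the critical representative is stable, it must equal this element, so critical $=$ superstable. Transporting back through $f=\LL x$ identifies this common family with the integer points of $\mathcal{P}(\mathcal{L}(\LL))$, which completes the argument. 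The only real subtlety—and the step I would flag—is the collapse of stability and superstability, a phenomenon special to $\MM=\I$ (it fails for general $\MM$) which, together with the uniqueness supplied by Theorem~\ref{thm:critical}, is exactly what forces the critical and superstable sets to agree.
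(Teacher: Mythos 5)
Your argument is correct, and while it opens with the same move as the paper, it finishes along a genuinely different, more self-contained route. Both you and the paper transport the problem to $R^+$ and run the same firing computation: integrality of $\LL(x-z)$ is automatic, so validity is governed by non-negativity alone, and hence any stable (in particular superstable or critical) $x \in R^+$ satisfies $0 \le x_i < 1$. The paper then finishes by counting: superstables form a system of representatives of $\mathbb{Z}^n/\textrm{im}\,\LL$ (via Theorems~\ref{thm:unique} and~\ref{thm:SS}), the integer points of $\mathcal{P}(\mathcal{L}(\LL))$ do as well (a classical fact it cites), and containment of one system of representatives in another forces equality; criticals are handled by the same counting via Theorem~\ref{thm:critical}. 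You instead prove directly the two facts the paper outsources: that stability implies superstability when $\MM=\I$ (coordinatewise decoupling: any $z \ge 0$, $z \neq 0$ drives some coordinate below $0$), and that every $\sim_{\I}$-class has exactly one representative in $[0,1)^n$, namely the fractional part, which you check lies in $R^+$. Your route therefore avoids the energy-minimization machinery and the external lattice fact entirely, invoking only Theorem~\ref{thm:critical} (and only for the critical case); it also makes explicit something the paper obtains only a posteriori, namely that stable and superstable coincide for $\MM=\I$, and it supplies by hand the existence of a valid representative in each equivalence class, a point the counting argument quietly presupposes. The trade-off is brevity: given the machinery already developed in the paper, the counting argument is shorter.
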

\begin{proof}
  Let $f \in S^+$ be a superstable configuration and let $f = {\LL}x$.  Then $x$ is a superstable configuration in $R^+$.  Hence $x \geq 0$.  Suppose $x_i \geq 1$ for some $i$.  Then, $x - {\MM}e_i = x - e_i \geq 0$, contradicting the superstability of $x$.

  Therefore every superstable configuration $f$ is contained in $\mathcal{P}(\mathcal{L}(\LL))$.  Furthermore, the number of superstable configurations is the same as the number of integer points in $\mathcal{P}(\mathcal{L}(\LL))$ since both form a system of representatives for $\mathbb{Z}^n / {\textrm{im}} {\LL}$.  Hence the two collections must be the same.

  The argument above actually shows that any stable point of $S^+$ must be contained in $\mathcal{P}(\mathcal{L}(\LL))$.  As critical configurations are also stable and a system of representatives for $\mathbb{Z}^n / {\textrm{im}} {\LL}$, they indeed must coincide with the superstables.
  \end{proof}

\begin{example}\label{ex:I}
  Let $\LL$ be as in Example~\ref{ex:running} and let $\MM = \I$.  Then the superstable and critical configurations are:
 $$\begin{pmatrix*}[r] 1 \\ 0 \\1 \end{pmatrix*}, \begin{pmatrix*}[r] 0 \\ 1 \\ 0 \end{pmatrix*}, \begin{pmatrix*}[r] 0 \\ 0 \\ 0 \end{pmatrix*}, \begin{pmatrix*}[r] 2 \\ -1 \\ 2 \end{pmatrix*}.$$   
  \end{example}

\subsection{Combinatorial Laplacians}
The classical framework for chip-firing is on a graphical network with
distribution given by the graph Laplacian as discussed in Section~\ref{graphs}.  In~\cite{DKM}, a model for
chip-firing in higher dimensions was introduced.  The
underlying configuration of the system is a simplicial complex and the
commodities (chips, flow) are associated to ridges (codimension-one
faces) of the complex.   The distribution dynamics are given by a reduced
combinatorial Laplacian relating codimension-one faces to maximal
faces.  The algebraic aspects of chip-firing, i.e. the form and size
of the critical group were worked out in~\cite{DKM}, including the
connection to higher dimensional spanning trees of the complex.  It
remained an open problem to give a notion of critical or superstable configurations
in this context, namely sets of representative for the critical group
that reflect long-term stability dynamics of the system.  

The paradigm introduced here provides well motivated collections of
critical and superstable configurations.  We illustrate with an example and refer the reader to ~\cite{DKM} for a more thorough discussion of chip-firing in higher dimensions.

\begin{example} \label{ex:Tetra}
  Let $\Delta$ be the two-dimensional boundary complex of the
  tetrahedron with maximal faces: $\{123, 124, 134, 234\}$.  Faces of
  the complex will be oriented, we will use the standard orientation
  induced by the vertices.  In our example, this directs the edges
  from smaller vertex to larger vertex.  The two-dimensional faces are
  also oriented from smaller to larger vertices.  Envision traversing
  the triangular face $124$ by starting at vertex $1$ moving to vertex
  $2$ then vertex $4$ and returning back to vertex $1$.  Traversing
  the face in this way, we consider the edges $12$ and $24$ oriented
  in the same direction with respect to $124$ and oriented oppositely
  to the edge $14$ with respect to $124$.  The consistency of
  orientation of two edges contained in a common triangle will be
  reflected by a $+1$ or $-1$ in the combinatorial Laplacian.   

\begin{figure}[h]
\includegraphics[height=1.2in]{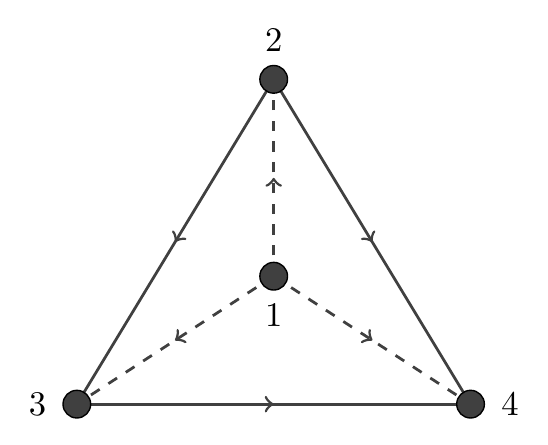}
\caption{The complex $\Delta$ is the boundary of the tetrahedron.  The three dashed edges represent the sink of the system.}  
\end{figure}

  As shown in~\cite{DKM}, the sink in this context consists of an
  entire spanning tree of the one-dimensional skeleton of $\Delta$.
  Choose the sink to be the one-dimensional tree $T$ with edges
  $\{12,13,14\}$.  With this choice, there will be three states in the
  system corresponding to the three edges not in the tree: $\{23, 24,
  34\}$.  The reduced combinatorial Laplacian is formed by deleting
  the rows and columns corresponding to the edges in the spanning
  tree.  In this case, the reduced combinatorial Laplacian $\LL$ of
  $\Delta$ with respect to $T$ is the matrix from
  Example~\ref{ex:running}, our running example:
\[
\LL = 
 \kbordermatrix{ & 23 & 24 & 34 \cr 23 & 2 & -1 & 1  \cr 24 & -1
  & 2 & -1 \cr 34 & 1 & -1 & 2}. 
\]

Configurations in $S^+$ are elements of $\mathbb{Z}^3$ which associate
an integer value to edges.  In this context, it is natural to consider
these values as flows along the edges (as opposed to numbers of
chips).  Firing an edge consists of subtracting the appropriate row
of the combinatorial Laplacian.  This can be interpreted as diverting
flow from an edge across triangles that contain it.  

Computing critical and superstable configurations requires a choice of
$\MM$.  Suppose $\MM = \I$.  Then we find the configurations of Example~\ref{ex:I}:
 $$\begin{pmatrix*}[r] 1 \\ 0 \\1 \end{pmatrix*}, \begin{pmatrix*}[r] 0 \\ 1 \\ 0 \end{pmatrix*}, \begin{pmatrix*}[r] 0 \\ 0 \\ 0 \end{pmatrix*}, \begin{pmatrix*}[r] 2 \\ -1 \\ 2 \end{pmatrix*}.$$   
Figure~\ref{fig:2Dcriticals} illustrates the three non-zero critical configurations.  The third image interprets the negative value in the critical configuration not as a negative flow but as flow in the opposite direction of the orientation of the edge.

\begin{figure}[h]
\includegraphics[height=1.4in]{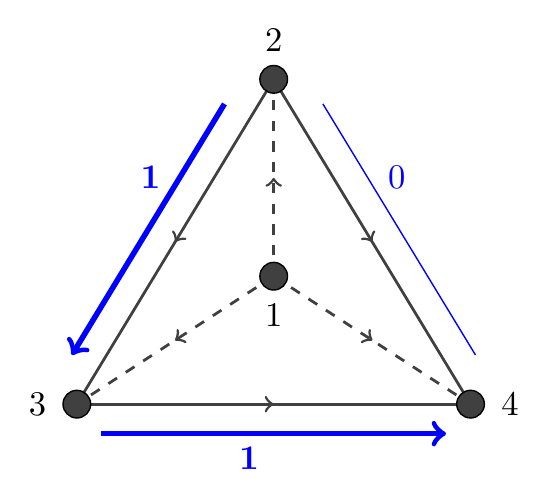}
\includegraphics[height=1.4in]{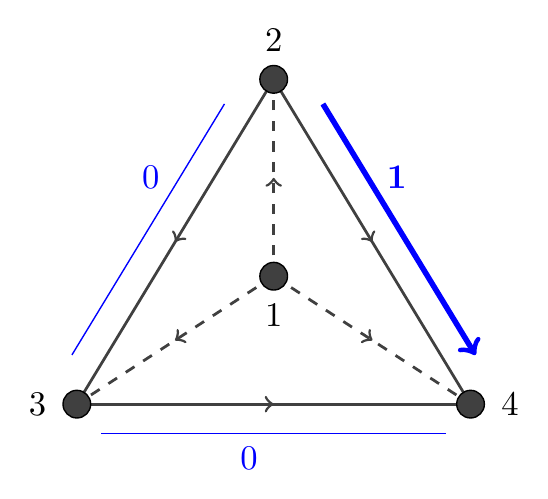}
\includegraphics[height=1.4in]{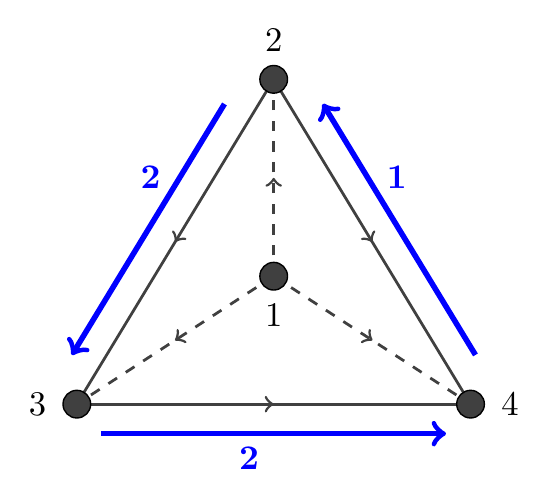}
\label{fig:2Dcriticals}
\caption{The three non-zero critical and superstable configurations of the tetrahedron with sink equal to all edges containing the vertex $1$ and under the pairing $(\LL, \I)$.}
\end{figure}

One narrative for this system is as a model of traffic flow.  The edges represent streets and the vertices are intersections.  Sink edges represent major boulevards that can handle large amounts of traffic.   As local streets become congested, traffic diverts to neighboring streets.  Informally, the system stabilizes when as much traffic as possible has been diverted to the major boulevards.  The third image of Figure~\ref{fig:2Dcriticals} is a solution in which the directionality of street $24$ has been reversed.  
\end{example}

We end with a general question given the new framework for chip-firing.  
\begin{question} Given $\LL$, are there natural choices for $\MM$?
\end{question}
We have seen that if $\LL$ is an $\MM$-matrix, then $\MM = \LL$ is a
natural choice that yields the classical model with non-negative valid
configurations.  We have also seen that we can always set $\MM = \I$
and achieve the integer points of the fundamental parallelepiped.
What are natural choices if $\LL$ is the reduced combinatorial
Laplacian of a simplicial complex, as in Example~\ref{ex:Tetra}?  The
matrix $\MM$ from Example~\ref{ex:running} is the reduced graph
Laplacian of the facet-to-facet dual graph of $\Delta$.  But, in
general, this dual graph will not have the correct size to match
$\LL$. \\

{\bf Acknowledgements}  The authors thank Harjasleen Malvai for helpful discussions on this project and acknowledge the Brown University summer UTRA program for her funding.  


\end{document}